\numberwithin{equation}{section}
\newtheorem{question}{Question}[section]
\newtheorem{proposition}{Proposition}[section]
\newtheorem{corollary}{Corollary}[section]
\newtheorem{definition}{Definition}[section]
\newtheorem{theorem}{Theorem}[section]
\newtheorem{lemma}[theorem]{Lemma}
\begin{document}
	
	\title[On  additive complements in the complement of a set of natural numbers]{On  additive complements in the complement of a set of natural numbers}
	
	%%%%%%    Information for the first author
	\author[B R Patil]{Bhuwanesh Rao Patil}
	\address{Department of Applied Science and Humanities, Government Engineering College, Nawada, Bihar, 805111, India}
	\email{bhuwanesh1989@gmail.com}
	
	%\thanks{$^{*}$Research supported by the Ministry of Human Resource Development, India.}

	%%%%%%   Information for the second author
	
	\author[Mohan]{Mohan$^{\ast}$}
	\address{Stat-Math Unit, Indian  Statistical Institute,  S. J. S. Sansanwal Marg,  New Delhi, 110016, India}
	\email{mohan98math@gmail.com}
	\thanks{$^{\ast}$The corresponding author}
	
	%    General info
	\subjclass[2020]{Primary ; Secondary 11P70, 11B75, 11B13}
	
	%\date{January 1, 2001, and, in revised form, June 22, 2001.}
	
	%\dedicatory{This paper is dedicated to our advisors.}
	
	\keywords{Sumset, Additive complement, Asymptotic density.}

	\begin{abstract}
		Let $A$  be a set of natural numbers.	A set $B$, a set of natural numbers, is an additive complement of the set $A$  if all sufficiently large natural numbers can be represented in the form $x+y$, where $x\in A$ and $y\in B$. Erd\H{o}s proposed a conjecture that every infinite set of natural numbers has a sparse additive complement, and in 1954, Lorentz proved this conjecture.  This article describes the existence or non-existence of those additive complements of the set $A$  that is a subset of the complement of $A$. We provide a ratio test to verify the existence of such additive complements. In precise, we prove that if   $A=\{a_i: i\in \mathbb{N}\}$ is a set of natural numbers such that  $a_i<a_{i+1} $ for $i \in \mathbb{N}$ and $\liminf_{n\rightarrow \infty } (a_{n+1}/a_{n})>1$, then there exists a set $B\subset \mathbb{N}\setminus A$  such that $B$ is a sparse additive complement of the set $A$.

	\end{abstract}
	
	\maketitle
	
	%\footnote{Corresponding author}		content...
	
	\section{Notations}
	Let $\mathbb{N}$ be  the set of all natural numbers and $\mathbb{Z}$ be the set of all  integers. The set $\mathcal{P}(\mathbb{N})$ is the collection of all subsets of natural numbers. For a set $A$, $|A|$ denotes the cardinality of $A$. For any real valued function $f$,  we assume $\sum_{s\in S}f(s) = 0$ and $\bigcup_{s\in S}f(s) = \varnothing $ if $S = \varnothing$. If $A$ is a subset of some set $S$, then the indicator function of the set $A$, denoted by $1_{A} $, is defined from $S$ to $\{0,1\}$ by 
	\begin{equation*}
		1_{A}(x) =
		\begin{cases}
			1, & x\in A;\\
			0, & x\notin A.
		\end{cases} 
	\end{equation*}
	By $\max(A)$, we mean the largest element of the set $A$. For a given real number $x$, $\lfloor x \rfloor$ denotes the largest integer less than or equal to $x$.	For given real numbers $a$ and $b$, we denote
	$$(a,b):=\{x\in \mathbb{Z}: a<x<b \}, \quad
	(a,b]:=\{x\in \mathbb{Z}: a<x\leq b \},$$
	$$[a,b):=\{x\in \mathbb{Z}: a\leq x<b \}, \quad
	[a,b]:=\{x\in \mathbb{Z}: a\leq x\leq b \}.$$
	For given  subsets $A$ and $B$ of $\mathbb{N}$ and an integer $u$, we define $$B+A = A+B:=\{x+y:x\in A,y\in B\}, \quad \text{} B+u = u+B:=\{u+y:y\in B\},$$
	$$ \text{and} ~ u-B:=\{u-y:y\in B\}.$$
	For given sets $X$ and $Y$, $X\setminus Y := \{x\in X: x \notin Y\}$. The set $X \setminus Y$, also known as the complement of $Y$ in $X$. For any subset $A$ of $\mathbb{N}$, ``the complement of  set  $A$" means $\mathbb{N}\setminus A$ throughout the article.

	\section{Introduction}
	A simple Diophantine equation $x+y = n$ always has a solution among nonnegative integers for every natural number $n$. However, finding solutions will not be so easy if we choose   $x$ and $y$ from the prescribed set of natural numbers. This motivates us to study a solution of the Diophantine equation $x+y=n$, $x\in A$ and $y\in B$, where $A$ and $B$ are sets of non-negative integers for all sufficiently large natural numbers $n$. In general,  we are interested in studying those pairs $(A, B)$ for which $\mathbb{N}\setminus (A+B)$ is a finite set. This sparks curiosity on the following question:
	\begin{question}
		Let $A$ be a non-empty set of natural numbers. Does there exists $B$, a set of natural numbers, such that $\mathbb{N}\setminus (A+B)$ is a finite set. 
	\end{question}
	%	\noindent This brings  the following definition:
	\noindent This leads to the following definition.
	\begin{definition}
		Let $A$ be a set of natural numbers. Then, a set $B$, a set of natural numbers, is an additive complement of $A$ if the complement of $A+B$ in the set of all natural numbers is a finite set.
	\end{definition} 
	\noindent
	Note that the set of all natural numbers is always an additive complement to every non-empty set of natural numbers. So, one can be curious to look for other special additive complements with certain structures. Examples of such additive complements can be sparse additive complements, additive complements of the form of the union of disjoint infinite arithmetic progressions, additive complements of a set inside the complement of that set, etc. By considering a set with zero density  (see Definition \ref{density_definition}) as a sparse set,
	Erd\H{o}s proposed a conjecture that every infinite set of natural numbers has a sparse additive complement, and in 1954, Lorentz proved this conjecture. Additive complements of the form of the union of disjoint infinite arithmetic progressions are described by Mohan et al. \cite{Mohan_Patil_2024}. One may see  \cite{Lorentz, Mohan_Patil_2024, erdos, Grekos,Faisant, Leonetti1,Leonetti2} and references therein for similar types of results. In this article, we shall study additive complements of a set in the complement of that set. Such type of additive complements is defined below: 
	\begin{definition}
		Let $A$ be a set of natural numbers. Then $B$, a set of natural numbers, is an additive complement of $A$ in the complement of $A$ if $B$ is an additive complement of $A$ and $B\cap A=\varnothing$.
	\end{definition} 
	\noindent For example, the set of all primes acts as a sparse additive complement of the set of all composite natural numbers. However, the existence of an additive complement of a set in the complement of that set is not necessary; for example, there is no additive complement of the set of all even natural numbers in the set of odd natural numbers. In fact, by proving the following result, Mohan et al. \cite{Mohan_Patil_2024} gave an infinite family of sets $A$ such that every additive complement of the set $A$ intersects the set $A$.
	\begin{proposition}\textup{\cite[Mohan, Patil and Pandey]{Mohan_Patil_2024}}\label{Thm-1} Let $\alpha$ be a real number such that $0 \leq \alpha \leq 1$. Let $S$ be an infinite set of  natural numbers. Then
		\begin{enumerate}
			\item [\upshape(1)] there exists a set $A\subseteq \mathbb{N}$  such that $(A+(\mathbb{N}\setminus A))\cap S=\varnothing,$ 
			\item [\upshape(2)] there exists a set  $A\subseteq \mathbb{N}$ with $d(A)=\alpha$ such that there is no additive complement of $A$ in the complement of $A$.
		\end{enumerate} 
	\end{proposition}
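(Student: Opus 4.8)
The plan is to route both parts through one combinatorial reduction. For a fixed $n\in\mathbb{N}$, one has $n\notin A+(\mathbb{N}\setminus A)$ exactly when $n$ has no representation $n=x+y$ with $x\in A$ and $y\notin A$; since any such representation has $x,y\in[1,n-1]$, this means that for every $x\in[1,n-1]$ one cannot have $x\in A$ and $n-x\notin A$. Applying this both to $x$ and to $n-x$ turns it into the symmetry statement $1_A(x)=1_A(n-x)$ for all $x\in[1,n-1]$. I would first record this as a lemma: $(A+(\mathbb{N}\setminus A))\cap S=\varnothing$ holds if and only if, for every $s\in S$, the initial segment $A\cap[1,s-1]$ is symmetric about $s/2$, i.e. $1_A(x)=1_A(s-x)$ for all $x\in[1,s-1]$. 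I would also record the consequence that powers both non-existence claims: if this holds for an infinite $S$, then every $B\subseteq\mathbb{N}\setminus A$ satisfies $A+B\subseteq A+(\mathbb{N}\setminus A)\subseteq\mathbb{N}\setminus S$, so $\mathbb{N}\setminus(A+B)\supseteq S$ is infinite and $B$ is not an additive complement; hence such an $A$ has no additive complement in its complement.

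For part (1), I would apply the lemma directly. Let $\sim$ be the smallest equivalence relation on $\mathbb{N}$ generated by declaring $x\sim s-x$ whenever $s\in S$ and $1\le x<s$. By the lemma, $A$ satisfies $(A+(\mathbb{N}\setminus A))\cap S=\varnothing$ precisely when $1_A$ is constant on each $\sim$-class, i.e. when $A$ is a union of $\sim$-classes. Since the union of no classes (and the union of all classes) is such a set, existence is immediate; and whenever $\sim$ admits more than one class one obtains further proper examples by taking $A$ to be the union of any subfamily of classes, which, together with the sets produced in part (2), furnishes the advertised family of sets $A$ whose every additive complement meets $A$.

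For part (2) the task is, for each prescribed density target, to produce a set $A$ with $d(A)=\alpha$ for which infinitely many $n$ miss $A+(\mathbb{N}\setminus A)$; by the consequence above this forces the absence of an additive complement in the complement. I would build $A$ recursively as a nested sequence of palindromic initial segments. Fix a fast-growing sequence $m_1<m_2<\cdots$ with $m_{k+1}\ge 2m_k$. Having defined $A$ on $[1,2m_k-1]$, I extend it to $[1,2m_{k+1}-1]$ by reflecting the frozen block $[1,2m_k-1]$ across $m_{k+1}$ to fix membership on the mirror block $[2m_{k+1}-2m_k+1,\,2m_{k+1}-1]$ (disjoint from the frozen one because $m_{k+1}\ge 2m_k$), and then filling the free central block $[2m_k,\,2m_{k+1}-2m_k]$, which is itself centred at $m_{k+1}$, with any set symmetric about $m_{k+1}$. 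This keeps $A\cap[1,2m_j-1]$ symmetric about $m_j$ for every $j\le k+1$, since the earlier palindrome conditions are never disturbed, so each $n=2m_k$ misses the sumset. The proportion of the central block put into $A$ at each stage is the free parameter, and since $m_{k+1}\ge 2m_k$ makes the central block dominate $[1,2m_{k+1}-1]$ in length, I can drive the counting density at the checkpoints $2m_{k+1}$ to any target and choose these proportions so that the density tends to $\alpha$.

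The main obstacle I anticipate is the density bookkeeping in part (2): controlling $|A\cap[1,N]|/N$ only along the checkpoints $2m_k$ is not enough, and one must check that both the upper and lower densities equal $\alpha$, i.e. that the counting function does not oscillate between checkpoints. Taking the growth of $m_k$ fast enough and filling each central block symmetrically from the centre outward keeps the intermediate partial counts sandwiched, so the oscillation between consecutive checkpoints is $o(1)$; this is the estimate I would carry out in detail. Two endpoint cases also need a remark: for $\alpha=1$ one leaves a symmetric, density-zero collection of holes so that $\mathbb{N}\setminus A$ remains infinite (otherwise the statement is vacuous), and for $\alpha=0$ one simply fills the central blocks ever more sparsely. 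Everything else is routine once the symmetry lemma is in place.
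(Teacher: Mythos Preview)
This proposition is not proved in the present paper; it is quoted from \cite{Mohan_Patil_2024} and used as input, so there is no in-paper argument to compare your attempt against. That said, your proposal stands on its own, and the core is correct.

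The symmetry reduction is exactly right: $n\notin A+(\mathbb{N}\setminus A)$ if and only if $1_A(x)=1_A(n-x)$ for every $x\in[1,n-1]$. From this, part~(1) is immediate (even trivially, via $A=\varnothing$ or $A=\mathbb{N}$, with the equivalence-class picture supplying the nontrivial family the surrounding text refers to). For part~(2) the nested-palindrome construction does produce an $A$ with every $2m_k$ outside $A+(\mathbb{N}\setminus A)$: with $m_{k+1}\ge 2m_k$ the frozen block $[1,2m_k-1]$, its mirror $[2m_{k+1}-2m_k+1,2m_{k+1}-1]$, and the central block $[2m_k,2m_{k+1}-2m_k]$ are pairwise disjoint, so the earlier symmetries persist while the new one about $m_{k+1}$ is installed.

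The one place to tighten is exactly the obstacle you flag. The phrase ``filling each central block symmetrically from the centre outward'' is ambiguous: if it means concentrating the chosen fraction of elements near $m_{k+1}$, then for $N$ well inside $[2m_k,m_{k+1}]$ the ratio $|A\cap[1,N]|/N$ can drift far from $\alpha$, and the sandwich you describe does not hold. What makes the argument go through is to fill the left half $[2m_k,m_{k+1}]$ with a \emph{locally uniform} density-$\alpha$ pattern (for instance a truncated Beatty-type set when $\alpha>0$) and reflect it to the right half; a routine estimate then gives $|A\cap[1,N]|/N=\alpha+o(1)$ uniformly for $N\in[2m_k,2m_{k+1}]$, once the checkpoint density at $2m_k$ is already $\alpha+o(1)$. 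Note also that ``$m_{k+1}\ge 2m_k$ makes the central block dominate'' is not literally true (at $m_{k+1}=2m_k$ the central block is a single point); but domination is unnecessary once you fill uniformly, and the growth condition is really only needed for disjointness of the blocks. With these clarifications, and your remarks on the endpoints $\alpha\in\{0,1\}$, the argument is complete.
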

	\noindent So, one can be interested in the following question:
	\begin{question}
		Characterize all set $A$ of natural numbers such that there exists an additive complement of the set $A$ in the complement of the set $A$. 
	\end{question}
	\noindent Now, we answer the above question in the form of the following theorem by providing a family of sparse sets of natural numbers.
	\begin{theorem}[\upshape{Ratio test for the existence of additive complement in the complement}]\label{Ratio test}
		Let $A = \{a_{i} \in \mathbb{N} : i\in \mathbb{N}\}$ be an infinite set of natural numbers such that $a_{i}<a_{i+1}$ for all $i \in \mathbb{N}$ and $\liminf_{n\rightarrow \infty }(a_{n+1}/a_{n}) >1$, then there exists a set $B$, a set of natural numbers with density zero, such that $B$ is an additive complement of $A$ in the complement of $A$.
	\end{theorem}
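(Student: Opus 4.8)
My plan is to reduce the construction to Lorentz's theorem and to use the ratio hypothesis only in order to ``repair'' the complement it produces so that the repaired set becomes disjoint from $A$. Since $A$ is infinite, Lorentz's theorem \cite{Lorentz} gives an additive complement $B_{0}$ of $A$ with $d(B_{0})=0$; fix $N_{1}$ with $[N_{1},\infty)\subseteq A+B_{0}$. The first naive move is to pass to $B_{0}\setminus A$: it is disjoint from $A$ and still has density zero, but deleting the elements of $A\cap B_{0}$ can destroy the coverage of some integers. So I would introduce the ``damaged'' set
\[
V:=\bigl\{\,m\ge N_{1}\ :\ (m-A)\cap(B_{0}\setminus A)=\varnothing\,\bigr\},
\]
repair it by adjoining a thin patch $V'$, and take $B:=(B_{0}\setminus A)\cup V'$. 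To build $V'$: for all but finitely many $m\in V$ I will choose an index $i(m)$ with $a_{i(m)}\le m/2$ and $m-a_{i(m)}\notin A$, set $b_{m}:=m-a_{i(m)}$ (so $m/2\le b_{m}<m$), and then put $V':=\{\,b_{m}\,\}$.

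The hypothesis enters in exactly two places, which I would isolate as preliminary observations. Fix $q$ with $1<q<\liminf_{n}(a_{n+1}/a_{n})$ and $n_{0}$ with $a_{n+1}>q\,a_{n}$ for $n\ge n_{0}$. (a) \emph{Sparsity of $A+A$.} From $a_{n}>q^{\,n-n_{0}}a_{n_{0}}$ one gets $|A\cap[1,x]|=O(\log x)$, hence $|(A+A)\cap[1,x]|\le\bigl(|A\cap[1,x]|\bigr)^{2}=O\bigl((\log x)^{2}\bigr)=o(x)$, so $A+A$ has density zero. I then check $V\subseteq A+A$: if $m\ge N_{1}$ then $m\in A+B_{0}$ produces some $c=m-a_{i}\in B_{0}$, and $m\in V$ forces $c\notin B_{0}\setminus A$, so $c\in A$ and $m=a_{i}+c\in A+A$. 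Consequently $V$, and hence $V'$, has density zero. (b) \emph{Finiteness of the obstruction set.} The set
\[
W:=\bigl\{\,m\in\mathbb{N}\ :\ m-a_{i}\in A\text{ for every }i\text{ with }a_{i}\le m/2\,\bigr\}
\]
is finite: if $m\in W$ is large then $m-a_{1}$ and $m-a_{2}$ both lie in $A$ and differ by the fixed amount $a_{2}-a_{1}$, while $a_{j+1}>q\,a_{j}$ for $j\ge n_{0}$ forces the smaller of any two elements of $A$ with a prescribed positive difference to be bounded; this in turn bounds $m$. Hence for every $m\in V$ outside the finite set $W\cup[1,2a_{2})$ a suitable $i(m)$ exists, so $V'$ is well defined and every such $m$ lies in $A+V'$.

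It then remains to verify the three required properties of $B=(B_{0}\setminus A)\cup V'$, and all of them are routine once (a) and (b) are in hand. First, $B\cap A=\varnothing$, because $B_{0}\setminus A$ and every $b_{m}$ avoid $A$. Second, $d(B)=0$: indeed $B_{0}\setminus A\subseteq B_{0}$ has density zero, while $b_{m}\in[m/2,m)$ gives $|V'\cap[1,x]|\le|\{\,m\in A+A:m\le 2x\,\}|=O\bigl((\log x)^{2}\bigr)=o(x)$. Third, $A+B$ is cofinite: for $m$ beyond $\max\{N_{1},\max W,2a_{2}\}$, either $m\notin V$, in which case some $m-a_{i}$ lies in $B_{0}\setminus A\subseteq B$, or $m\in V$, in which case $m=a_{i(m)}+b_{m}$ with $b_{m}\in V'\subseteq B$; either way $m\in A+B$. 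This yields Theorem \ref{Ratio test}.

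The heart of the argument — and the only place where geometric growth is genuinely indispensable — is the finiteness of $W$ in step (b): for $m\in W$ every representation $m=a_{i}+b$ with $b\ge 1$ is forced to have $b\in A$, so such an $m$ cannot lie in $A+B$ for \emph{any} $B$ disjoint from $A$; this is exactly the obstruction made explicit in Proposition \ref{Thm-1}, and the ratio condition is precisely what removes it. The sparsity estimate in (a), the choice of $i(m)$ with $b_{m}$ comparable to $m$ (needed so that the density of $V'$ can be read off from that of $A+A$), and the management of the various finite exceptional sets are the remaining, entirely elementary, ingredients; I expect no real difficulty beyond organising this bookkeeping cleanly.
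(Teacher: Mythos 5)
Your proof is correct, but it follows a genuinely different route from the paper's. The paper never invokes Lorentz's theorem as a black box: it reproves a Lorentz-type covering statement from scratch in a form that already avoids $A$. Proposition \ref{existence_disjoint_additive_complement} shows that the counting condition $|A\cap[1,x)|>|A\cap(x,4x]|$ --- which the ratio hypothesis supplies for all large $x$, since geometric growth forces $|A\cap(x,4x]|\le r$ for a fixed $r$ --- already gives $(2x,4x]\subset A+\left((x,4x]\setminus A\right)$, and Proposition \ref{process_for_refined_additive_complement} is a greedy refinement that thins $(x,4x]\setminus A$ down to a set $S_x$ of size $O\!\left(x\log(|A\cap[1,x]|)/|A\cap[1,x]|\right)$ still covering $(2x,4x]$; the union of the $S_{2^i}$ over dyadic scales is $B$, and Lemma \ref{density_calculation} gives density zero. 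You instead take Lorentz's complement $B_0$, pass to $B_0\setminus A$, show that the damaged set $V$ lies in $A+A$ (whose counting function is $O((\log x)^2)$ under the ratio hypothesis), and repair each damaged $m$ by a single element $b_m=m-a_{i(m)}\notin A$ with $b_m\ge m/2$; the finiteness of your obstruction set $W$ is the second, and decisive, use of geometric growth (two elements of $A$ at a prescribed positive distance must both be small). Your route is shorter and more modular, and it isolates cleanly the two points where the hypothesis acts; the paper's route is self-contained and yields the sharper Lorentz-quality bound $|B\cap[1,n]|\ll\sum_{j\le n}\log(|A\cap[1,j]|)/|A\cap[1,j]|$ rather than bare density zero. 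One small caveat on your closing commentary: for $m\in W$ it is not true that \emph{every} representation $m=a_i+b$ forces $b\in A$ (only those with $a_i\le m/2$ do), so $W$ is not literally the obstruction exhibited in Proposition \ref{Thm-1}; nothing in your actual proof depends on that remark.
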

	\noindent Proof of the above theorem given in Section \ref{Proof of Theorem}.
	
	The conclusion of the above theorem may not hold if  $\liminf_{n\rightarrow \infty } (a_{n+1}/a_{n}) =1$. Using Proposition \ref{Thm-1} and Theorem \ref{Ratio test}, we can construct examples of such sets $A$. Moreover, we have the following corollary.
	\begin{corollary}\label{non-exitance of additive complement}
		Let $\alpha$ be a real number such that $0 \leq \alpha \leq 1$. Then there exists  a set $A = \{a_{i} \in \mathbb{N}: i\in \mathbb{N}\}$ with $a_{i}<a_{i+1}$, $d(A) = \alpha$ and $\liminf_{n\rightarrow \infty } (a_{n+1}/a_{n}) =1$ such that there is no additive complement in the complement of $A$.
	\end{corollary}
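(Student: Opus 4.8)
The plan is to read off the corollary from Proposition \ref{Thm-1}(2) and the contrapositive of Theorem \ref{Ratio test}, with essentially no extra work. Given $\alpha\in[0,1]$, Proposition \ref{Thm-1}(2) supplies a set $A\subseteq\mathbb{N}$ with $d(A)=\alpha$ that has no additive complement in its complement. First I would confirm that such an $A$ must be infinite, so that the ratio condition even makes sense: a finite \emph{nonempty} $A$ cannot have this property, since with $a:=\min A$ the cofinite set $B:=\mathbb{N}\setminus A$ satisfies $A\cap B=\varnothing$ and $A+B\supseteq a+B$, which is cofinite — contradicting the defining property of $A$. (The empty set is excluded simply because the corollary asks for $A=\{a_i:i\in\mathbb{N}\}$; alternatively one notes the construction behind Proposition \ref{Thm-1}(2) already yields an infinite set.) Hence we may enumerate $A=\{a_1<a_2<a_3<\cdots\}$.

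It then remains only to pin down $\liminf_{n\to\infty}(a_{n+1}/a_n)$ by a two-sided squeeze. Since $a_{n+1}>a_n$ for every $n$, we have $a_{n+1}/a_n>1$ for all $n$, hence $\liminf_{n\to\infty}(a_{n+1}/a_n)\ge 1$. Conversely, if $\liminf_{n\to\infty}(a_{n+1}/a_n)>1$ held, then Theorem \ref{Ratio test} would produce a density-zero set $B\subseteq\mathbb{N}\setminus A$ that is an additive complement of $A$, i.e. an additive complement of $A$ in the complement of $A$, contradicting the choice of $A$. Therefore $\liminf_{n\to\infty}(a_{n+1}/a_n)\le 1$, and combining the two bounds gives $\liminf_{n\to\infty}(a_{n+1}/a_n)=1$. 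This $A$ has all three required features: $a_i<a_{i+1}$, $d(A)=\alpha$, $\liminf_{n\to\infty}(a_{n+1}/a_n)=1$, and no additive complement in its complement.

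I do not expect a genuine obstacle here; the statement is arranged precisely so that the ratio condition is \emph{forced} rather than imposed — Theorem \ref{Ratio test} says every increasing sequence with $\liminf(a_{n+1}/a_n)>1$ admits the desired complement, so any set lacking it automatically has $\liminf(a_{n+1}/a_n)=1$. The only places deserving a line of care are the two noted above: verifying that the set furnished by Proposition \ref{Thm-1}(2) is infinite (so that the quotients $a_{n+1}/a_n$ are defined), and recording the trivial lower bound $\liminf_{n\to\infty}(a_{n+1}/a_n)\ge 1$ coming from strict monotonicity.
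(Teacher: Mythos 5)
Your proof is correct and follows exactly the route the paper indicates (it gives no written proof, only the remark that the corollary follows from Proposition \ref{Thm-1} and Theorem \ref{Ratio test}): take the set from Proposition \ref{Thm-1}(2) and force $\liminf_{n\to\infty}(a_{n+1}/a_n)=1$ by squeezing between the trivial lower bound and the contrapositive of Theorem \ref{Ratio test}. Your added check that the set must be infinite (since a finite nonempty set always has the cofinite set $\mathbb{N}\setminus A$ as a disjoint additive complement) is a detail the paper leaves implicit, and it is handled correctly.
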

	%	\begin{corollary}[\upshape{Root test for existence of additive complement in the complement}] \label{Thm-4}
		%	Let $A = \{a_{i} \in \mathbb{N} : i\in \mathbb{N}\}$ such that $a_{i}<a_{i+1}$ for all $i \in \mathbb{N}$. 
		%		If $\liminf_{n\rightarrow \infty } (a_{n})^{\frac{1}{n}} >1$, then there exists a set $B$, a set of natural numbers with density zero, such that $B$ is an additive complement of $A$ in the complement of $A$.
		%	\end{corollary}
	
	\section{Preliminary}
	
	\begin{definition}\label{density_definition}
		Let $A$ be a set of natural numbers. Then the upper asymptotic density and lower asymptotic density, denoted by $\overline{d}(A)$ and $\underline{d}(A)$ respectively, is defined as follows
		\[\overline{d}(A) = \limsup_{n\rightarrow \infty} \dfrac{|A \cap [1,n]|}{n} \quad \text{and} \qquad \underline{d}(A) = \liminf_{n\rightarrow \infty} \dfrac{|A \cap [1,n]|}{n}. \]
		If $\overline{d}(A) = \underline{d}(A) $, then we say that density of $A$, denoted by $d(A)$, exists  where $d(A)=\overline{d}(A) = \underline{d}(A)$. 
	\end{definition}
	
	\begin{lemma}\label{density_calculation}
		Let $(x_i)_{i=1}^{\infty}$ be an increasing sequence of natural numbers. Then 
		$$\liminf_{n\rightarrow \infty }\dfrac{1}{n}\sum_{i=1}^{n}\dfrac{\log x_i}{x_i}=0.$$ 
	\end{lemma}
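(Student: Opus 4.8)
The plan is to prove the apparently stronger statement that the genuine limit
$$\lim_{n\to\infty}\frac1n\sum_{i=1}^{n}\frac{\log x_i}{x_i}=0,$$
which of course yields the asserted value of the liminf. Throughout I read the hypothesis ``increasing sequence of natural numbers'' as \emph{strictly} increasing, consistently with the rest of the paper (and necessarily so, since for a constant sequence the stated conclusion would fail).

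The first step is to exploit strict monotonicity: then $x_1\geq 1$ and $x_{i+1}\geq x_i+1$, so $x_i\geq i$ for every $i\in\mathbb N$. The second step is to recall that $f(t):=(\log t)/t$ is decreasing on $[e,\infty)$ (since $f'(t)=(1-\log t)/t^2$) with global maximum $f(e)=1/e$. Consequently, for $i\geq 3$ we have $x_i\geq i\geq 3>e$, hence $\dfrac{\log x_i}{x_i}=f(x_i)\leq f(i)=\dfrac{\log i}{i}$, while for $i\in\{1,2\}$ we simply use $f(x_i)\leq 1/e$. This reduces matters to bounding $\dfrac1n\sum_{i=3}^{n}\dfrac{\log i}{i}$.

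The third step is an integral comparison. Since $f$ is decreasing on $[3,\infty)$, $f(i)\leq\int_{i-1}^{i}f(t)\,dt$ for $i\geq 4$, so summing gives $\sum_{i=4}^{n}f(i)\leq\int_{3}^{n}\dfrac{\log t}{t}\,dt=\dfrac{(\log n)^2-(\log 3)^2}{2}$, whence $\sum_{i=3}^{n}\dfrac{\log i}{i}\leq\dfrac{\log 3}{3}+\dfrac{(\log n)^2}{2}$. Putting the three pieces together,
$$0\leq\frac1n\sum_{i=1}^{n}\frac{\log x_i}{x_i}\leq\frac{1}{n}\left(\frac{2}{e}+\frac{\log 3}{3}\right)+\frac{(\log n)^2}{2n},$$
and the right-hand side tends to $0$ because $(\log n)^2=o(n)$. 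Hence the limit is $0$ and, a fortiori, so is the liminf.

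There is no serious obstacle here; the statement is a soft analytic fact. The only two points deserving care are: (i) the bound $x_i\geq i$ genuinely needs strict monotonicity, so one must be sure the hypothesis is interpreted that way; and (ii) the function $f$ is \emph{not} monotone on all of $[1,\infty)$, so the indices $i=1,2$ must be separated out before applying the monotonicity and the integral estimate, but their total contribution is bounded and therefore negligible after division by $n$.
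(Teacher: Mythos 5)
Your proof is correct, and it takes a genuinely different route from the paper's. The paper gives a soft $\epsilon$-argument: since $\log x/x\to 0$, for any $\epsilon>0$ all but finitely many terms of the sum are below $\epsilon$ (the finitely many exceptional indices being those with $x_i\le n_0$, which monotonicity keeps bounded in number), so the Ces\`aro average is eventually at most $\epsilon$ plus a vanishing boundary term; like you, the paper actually concludes that the full limit is $0$, not merely the liminf. You instead use the quantitative chain $x_i\ge i$, the monotonicity of $f(t)=(\log t)/t$ on $[e,\infty)$ to get $f(x_i)\le f(i)$, and an integral comparison to bound the partial sums by $\tfrac{(\log n)^2}{2}+O(1)$. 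Your version buys an explicit rate, $\frac1n\sum_{i=1}^n \frac{\log x_i}{x_i}=O\bigl((\log n)^2/n\bigr)$, and your care in isolating $i=1,2$ (where $f$ is not yet decreasing) is exactly right; the paper's version is slightly more robust in that it only needs $\log x/x\to 0$ and that finitely many $x_i$ lie below any fixed threshold, and would survive verbatim if $(\log x)/x$ were replaced by any nonnegative function tending to $0$. Both arguments hinge on the same interpretive point you flag: ``increasing'' must mean strictly increasing (or at least $x_i\to\infty$), since a constant sequence defeats the claim.
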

	\begin{proof}
		Let $\epsilon>0$ be a real number. Since $\lim_{n\rightarrow \infty}\dfrac{\log n}{n}=0$ by L'Hopital rule, there exists $n_0\in \mathbb{N}$ such that
		$$\dfrac{\log x}{x}<\epsilon ~\forall ~ x\geq n_0.$$
		Therefore, for each $t> n_0$, we have
		$$\dfrac{1}{t}\sum_{i=1}^{t}\dfrac{\log x_i}{x_i}\leq\dfrac{1}{t} \sum_{x_n\leq n_0} \dfrac{\log x_i}{x_i}+\dfrac{1}{t}\sum_{n_0<x_n\leq t}\dfrac{\log x_i}{x_i}<\dfrac{1}{t} \sum_{x_n\leq n_0} \dfrac{\log x_i}{x_i}+\frac{t-n_0}{t}\epsilon.$$
		Thus,
		$$\lim_{t\rightarrow \infty}\dfrac{1}{t}\sum_{i=1}^{t}\dfrac{\log x_i}{x_i} \leq \epsilon.$$
		Since $\epsilon$ is an arbitrary positive real number, we have
		$$\lim_{t\rightarrow \infty}\dfrac{1}{t}\sum_{i=1}^{t}\dfrac{\log x_i}{x_i}=0.$$
	\end{proof}	
	\begin{lemma}\label{lemma-3}
		Let $(a_{i})^{\infty}_{i=1}$ be an increasing sequence real numbers such that  $$\liminf_{n\rightarrow \infty } \dfrac{a_{n+1}}{a_{n}} >1.$$ Then there exists $n_{0} \in \mathbb{N}$ and a real number $\alpha >1$ such that $\dfrac{a_{n+1}}{a_{n}} > \alpha$ for all $n \geq n_{0}$.
	\end{lemma}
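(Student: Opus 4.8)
The plan is to unwind the definition of $\liminf$ directly. Write $L := \liminf_{n\to\infty} (a_{n+1}/a_{n})$, so that $L > 1$ by hypothesis (allowing $L = +\infty$, which is handled the same way with the usual convention). First I would fix a real number $\alpha$ with $1 < \alpha < L$; for instance $\alpha = (1+L)/2$ when $L$ is finite, and $\alpha = 2$ when $L = +\infty$. The only properties of $\alpha$ that matter are $\alpha > 1$ and $\alpha < L$.

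Next, recall the standard description $L = \sup_{N\in\mathbb{N}} \big( \inf_{n \geq N} \tfrac{a_{n+1}}{a_n} \big)$. Since this supremum exceeds $\alpha$, there must be some index $n_{0}\in\mathbb{N}$ with $\inf_{n \geq n_{0}} \tfrac{a_{n+1}}{a_n} > \alpha$; otherwise every term in the supremum would be $\leq \alpha$, forcing $L \leq \alpha$, a contradiction. Concretely, when $L$ is finite one applies the $\varepsilon$-characterization of $\liminf$ with $\varepsilon = L - \alpha > 0$ to obtain $n_{0}$ with $\inf_{n\geq n_{0}} \tfrac{a_{n+1}}{a_{n}} > L - \varepsilon = \alpha$, and when $L = +\infty$ one uses instead that the tail infima diverge to $+\infty$ and hence eventually exceed $\alpha$.

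Finally, for every $n \geq n_{0}$ we get $\dfrac{a_{n+1}}{a_{n}} \geq \inf_{m \geq n_{0}} \dfrac{a_{m+1}}{a_{m}} > \alpha$, which is exactly the asserted conclusion for this choice of $n_{0}$ and $\alpha$. I do not expect any genuine obstacle here: this is a routine manipulation of the $\liminf$, and the only mild care needed is to pick $\alpha$ \emph{strictly} below $L$ (so that the definition of $\liminf$ really delivers a tail of ratios bounded away from $\alpha$) and to treat the case $L=+\infty$ via the stated convention.
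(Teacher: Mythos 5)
Your argument is correct and is exactly the routine unwinding of the definition of $\liminf$ that the paper itself invokes (its proof is a one-line appeal to that definition). Choosing $\alpha$ strictly between $1$ and $L$ and applying the $\varepsilon$-characterization (with the $L=+\infty$ case handled separately) is the standard and intended route; there is nothing further to compare.
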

	\begin{proof}
		Proof follows from the definition of limit infimum.
	\end{proof}

	\section{Proof of Theorem \ref{Ratio test}}\label{Proof of Theorem}
	The proof of the Theorem \ref{Ratio test} is divided into two auxiliary propositions. 
	\begin{proposition}\label{existence_disjoint_additive_complement}
		Let  $m$ and $n$ be two natural numbers with $n\geq2m$, and let $l>1$ be a real number. Let $A$ be an infinite set of natural numbers such that $|A\cap [1,m]|> |A\cap (m,ln]|$. Then  $$(n,ln]\subset A+ ((m,ln]\setminus A).$$
	\end{proposition}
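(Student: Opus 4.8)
The plan is to fix an arbitrary integer $x \in (n,ln]$ and exhibit $a \in A$ and $b \in (m,ln]\setminus A$ with $x = a+b$, by a direct counting argument on the translate $x-A$. Write $b = x-a$. Since every $a \in A$ satisfies $a \geq 1$, we automatically get $x-a \leq x-1 < x \leq ln$, so the upper constraint $x-a \leq ln$ costs nothing; what we really must find is $a \in A$ with $x-a > m$ (equivalently $a < x-m$, equivalently $a \leq x-m-1$) and $x-a \notin A$. Call such an $a$ \emph{good}, and call $a \in A$ \emph{bad} if $a < x-m$ but $x-a \in A$. Then the count of good $a$'s is exactly $|A \cap [1,x-m)| - (\text{number of bad } a)$, and it suffices to show this is positive.

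For the first term, observe that $x > n \geq 2m$ forces $x \geq 2m+1$, hence $x-m \geq m+1$, so $[1,m] \subseteq [1,x-m)$ as sets of integers and therefore $|A\cap[1,x-m)| \geq |A\cap[1,m]|$. For the second term, note that if $a$ is bad then $x-a \in A$, $x-a > m$ (from $a<x-m$), and $x-a < x \leq ln$, so $x-a \in A\cap(m,ln]$; since the map $a \mapsto x-a$ is injective, the number of bad $a$'s is at most $|A\cap(m,ln]|$. Combining, the number of good $a$'s is at least $|A\cap[1,m]| - |A\cap(m,ln]|$, which is strictly positive by the hypothesis $|A\cap[1,m]| > |A\cap(m,ln]|$.

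Hence a good $a$ exists: for it, $b := x-a$ satisfies $b > m$, $b < x \leq ln$ (so $b \in (m,ln]$), and $b \notin A$, giving $b \in (m,ln]\setminus A$ and $x = a+b \in A + \big((m,ln]\setminus A\big)$. As $x \in (n,ln]$ was arbitrary, this proves $(n,ln] \subset A + \big((m,ln]\setminus A\big)$.

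The argument is essentially bookkeeping, so the only real point of care — the ``main obstacle'' such as it is — is keeping the half-open and closed integer intervals straight: specifically verifying that $n \geq 2m$ together with $x > n$ genuinely yields $[1,m] \subseteq [1,x-m)$, and that each bad $a$'s partner $x-a$ genuinely lies in the window $(m,ln]$ rather than spilling past $ln$ or down to $m$. Once these containments are pinned down, the injectivity of $a \mapsto x-a$ closes the estimate immediately.
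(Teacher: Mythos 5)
Your proof is correct and rests on the same counting mechanism as the paper's: for a target $x$, the translate $x-(A\cap[1,m])$ lies in $(m,ln]$ and outnumbers $A\cap(m,ln]$, so some difference $x-a$ avoids $A$. The paper packages this in two stages (first covering $(n,ln]$ by $u+(m,ln]$ with $u=\max(A\cap[1,m])$, then repairing the targets $u+y$ with $y\in A$ via exactly this pigeonhole on $U_y=(u+y)-(A\cap[1,m])$), whereas you apply the count uniformly to every $x\in(n,ln]$, which removes the case split; the substance is the same.
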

	\begin{proof}
		Since $|A\cap [1,m]|> |A\cap (m,ln]|$, we have $A\cap [1,m]\neq \varnothing$. Let $u$ be the largest element in the set $A\cap [1,m]$. Using the hypothesis $n\geq 2m$, we get that $(n,ln]\subseteq u+(m,ln].$ This gives 
		$$(n,ln]\subset\left( A+((m,ln]\setminus A)\right)\bigcup\left( u+((m,ln]\cap A)\right),$$
		as $u\in A$. Therefore, to show the conclusion of this proposition, we need to show that
		\begin{equation}
			u+X\subset A+((m,ln]\setminus A), \label{THMlem1eq1}
		\end{equation}
		where $X:=\{x\in (m,ln]\cap A : u+x\in (n,ln] \}.$ 
		
		Now, we shall prove expression \eqref{THMlem1eq1}. If $X = \varnothing$, then we are done. Otherwise, let $y\in X.$ Define
		$$U_y:=\{u+y-a: a\in A\cap [1,m]\}.$$
		Since $|A\cap[1,m]|> |A\cap (m,ln]|$, we have 
		\begin{equation}\label{THMlem1eq2}
			|U_y|> |A\cap (m,ln]|.
		\end{equation}  
		Since  $y\in X$ and $u=\max(A\cap [1,m])$, we get that
		$$m<y\leq y+u-a<y+u \leq ln ~\forall~ a\in A\cap [1,m].$$
		This gives  \begin{equation}
			U_y\subseteq (m, ln].\label{THMlem1eq3}
		\end{equation} 
		By combining  \eqref{THMlem1eq2} and \eqref{THMlem1eq3}, we obtain $$U_y\cap ((m,ln]\setminus A)\neq \varnothing.$$
		Let  $v\in U_y$ such that   $v\in  (m,ln]\setminus A$. Then    $$u+y=b+v\in  A+ (m,ln]\setminus A,$$ for some   $b\in A$.
		This proves the expression \eqref{THMlem1eq1}, which completes the proof of the lemma.
	\end{proof}
	
	\begin{proposition}\label{process_for_refined_additive_complement}
		Let $x_1$, $x_2$, $m$, and $n$ be natural numbers with $m+n\leq x_2$. 	Let  $A$ and  $B$ be non-empty  sets  of natural numbers such that $B \subseteq (x_{1}, x_{2}]$,  and $|A \cap [1,m-x_1)|> x_2-x_1 
		-|B|$. If 
		$(m, m+n]\subseteq \bigcup_{i\in B} (A+ i)$, then there exists a set $S\subseteq B $ such that $(m, m+n]\subseteq \bigcup_{i\in S} (A+ i)$ and 
		$$ |S|\leq (C|B|
		+n) \dfrac{\log\left(|A \cap [1,m-x_1)|- (x_2-x_1 -|B|)\right)}{|A \cap [1,m-x_1)|- (x_2-x_1 -|B|)},$$
		where $C$ is a constant and which is independent from $m$, $n$, $x_1$, $x_2$, $A$, and $B$.
	\end{proposition}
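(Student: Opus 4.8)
The plan is to run a Lorentz-style greedy covering argument on $(m,m+n]$, but to truncate it early and finish the leftover part by hand; this truncation is what replaces a $\log n$ by a $\log k$ in the final bound. Write $A':=A\cap[1,m-x_1)$ and put $k:=|A'|-(x_2-x_1-|B|)$, so $k\ge 1$ by hypothesis. The engine of the whole argument is the \emph{multiplicity estimate}: every $r\in(m,m+n]$ satisfies $|\{i\in B:\ r-i\in A\}|\ge k$. I would prove this exactly in the spirit of Proposition~\ref{existence_disjoint_additive_complement}: for $a\in A'$ one has $1\le a\le m-x_1-1$, and $m+1\le r\le m+n\le x_2$, so the translate $r-A'$ lies inside $(x_1,x_2]$ and has $|A'|$ elements; since $(x_1,x_2]\setminus B$ has exactly $x_2-x_1-|B|$ elements, $r-A'$ must meet $B$ in at least $|A'|-(x_2-x_1-|B|)=k$ points, each of which is an $i\in B$ with $r-i\in A'\subseteq A$. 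Summing over $r$ in an arbitrary $R\subseteq(m,m+n]$,
\[
\sum_{i\in B}|(A+i)\cap R| = \sum_{r\in R}|\{i\in B:\ r-i\in A\}| \ge k\,|R| ;
\]
applied with $R=(m,m+n]$, which is nonempty and covered by hypothesis, this also forces $k\le|B|$.

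Now for the two phases. In the greedy phase: by the displayed inequality and pigeonhole, every nonempty $R\subseteq(m,m+n]$ has some $i\in B$ with $|(A+i)\cap R|\ge k|R|/|B|$, so if we start from $R_0=(m,m+n]$ and repeatedly delete the translate covering the largest share of the current leftover set we get $|R_{j+1}|\le(1-k/|B|)|R_j|$, hence $|R_t|\le n\,e^{-tk/|B|}$. I would stop at the first index $t_0$ with $|R_{t_0}|\le|B|/k$; a direct estimate gives $t_0\le\lceil\frac{|B|}{k}\log^{+}(nk/|B|)\rceil$, where $\log^{+}s:=\max(\log s,0)$, and $t_0=0$ when $nk\le|B|$. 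In the patching phase: each of the at most $|B|/k$ points still uncovered lies, by the multiplicity estimate, in some $A+i$ with $i\in B$, so adjoining one such $i$ per leftover point covers all of $(m,m+n]$ while adding at most $\lceil|B|/k\rceil$ indices. Letting $S$ be the union of the two batches of chosen indices gives $S\subseteq B$, $(m,m+n]\subseteq\bigcup_{i\in S}(A+i)$, and
\[
|S|\le \frac{|B|}{k}\log^{+}\!\Bigl(\frac{nk}{|B|}\Bigr) + O\!\Bigl(\frac{|B|}{k}\Bigr) + O(1).
\]

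It then remains to absorb the right-hand side into $(C|B|+n)\frac{\log k}{k}$, which I expect to be routine: if $nk\le|B|$ the logarithmic term vanishes and $|B|/k\le C|B|\log k/k$ once $C$ is large enough; if $nk>|B|$, set $u:=nk/|B|\in(1,n]$, so $\frac{|B|}{k}\log^{+}u=\frac{n}{u}\log u$, and a split into $u\le k^{2}$ and $u>k^{2}$ (using $k\le|B|$ and that $s\mapsto(\log s)/s$ is decreasing for $s>e$) bounds this by an absolute constant times $\frac{n\log k}{k}+\frac{|B|\log k}{k}$. The one step I do not expect to be automatic is the decision to truncate the greedy process at the threshold $|B|/k$ rather than carry it to an empty leftover set: the latter only gives $O(\frac{|B|}{k}\log n)$, which is not of the required shape when $|B|$ is much larger than both $k$ and $n$. (The borderline value $k=1$, for which $\log k=0$, does not arise in the way this proposition is applied in Section~\ref{Proof of Theorem}.)
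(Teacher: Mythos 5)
Your proposal is correct and reaches the stated bound, but it analyses the greedy process in a genuinely different way from the paper. Both arguments share the same two ingredients: the multiplicity estimate that every $r\in(m,m+n]$ lies in at least $k:=|A\cap[1,m-x_1)|-(x_2-x_1-|B|)$ of the translates $A+i$, $i\in B$ (your derivation of this is exactly Lemma~\ref{counting_translation_element} specialised to $(a,b]=(x_1,x_2]$), and the greedy choice of the translate covering the most uncovered points. The paper then stratifies the greedy steps by their \emph{gain}: it counts $K_x$, the number of steps covering exactly $x$ new points, bounds the set $H(x)$ of points reachable only with gain $\le x$ by $x|B|/k$ via a double count against the multiplicity estimate, recovers $\sum_{x\le q_0}K_x\le\frac{|B|}{k}\sum_{x\le q_0}\frac1x$ by Abel summation, and optimises the cut-off at $q_0\approx k/\log k$. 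You instead run the standard set-cover analysis in \emph{time}: geometric decay $|R_t|\le n(1-k/|B|)^t$ of the uncovered set, truncation at the threshold $|R_{t_0}|\le|B|/k$, and one patching index per leftover point. Your intermediate bound $\frac{|B|}{k}\log^{+}(nk/|B|)+O(|B|/k)$ is in places sharper than the paper's (the logarithm disappears entirely when $nk\le|B|$), at the price of the final absorption into $(C|B|+n)\frac{\log k}{k}$, which does go through for $k\ge2$ along the lines you sketch. Two caveats you correctly flag are shared with the paper: at $k=1$ the right-hand side of the proposition vanishes while $|S|\ge1$ (the paper's choice $q_0=\lfloor k/\log k\rfloor$ degenerates there too, and in the application one has $k\ge r+1\ge2$); and both derivations really produce a bounded constant in front of the $n\frac{\log k}{k}$ term rather than exactly $1$, which is harmless for Corollary~\ref{process_for_refined_additive_complement_corollary}.
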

	\begin{corollary}\label{process_for_refined_additive_complement_corollary}
		Let $q$ be a natural number  and $A$  be an infinite set of natural numbers such that $|A\cap[1,q)|>|A\cap (q,4q]|$. If 	
		$(2q,4q]\subset A+ \left((q,4q]\setminus A\right)$, then there exists a set $S\subset\left((q,4q]\setminus A\right) $ such that
		\begin{equation*}
			(2q,4q]\subset A+ S \quad \text{and} \qquad |S|\leq C\dfrac{q\log\left(|A \cap [1,q)|- |A\cap (q,4q]|\right)}{|A \cap [1,q)|- |A\cap (q,4q]|},
		\end{equation*}
		where $C$ is a constant which is independent of  $q$ and $A$. 
	\end{corollary}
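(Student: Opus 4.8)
The plan is to derive this corollary as a direct specialization of Proposition \ref{process_for_refined_additive_complement}, applied with the parameter choices
$$x_1 = q, \quad x_2 = 4q, \quad m = 2q, \quad n = 2q, \quad B = (q,4q]\setminus A.$$

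First I would check the hypotheses of Proposition \ref{process_for_refined_additive_complement}. The requirement $m+n\leq x_2$ holds with equality, $4q\leq 4q$. By construction $B=(q,4q]\setminus A\subseteq (q,4q]=(x_1,x_2]$. The set $A$ is non-empty because it is infinite, and $B$ is non-empty because the hypothesis $(2q,4q]\subset A+((q,4q]\setminus A)$ forces it: were $B$ empty, the right-hand side would be empty while the left-hand side is not. The one computation requiring attention is the bookkeeping identity
$$x_2-x_1-|B| = 3q-\bigl(3q-|A\cap(q,4q]|\bigr) = |A\cap(q,4q]|,$$
which uses $|(q,4q]|=3q$ and $|B| = |(q,4q]| - |A\cap(q,4q]|$. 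Since $m-x_1=q$, the inequality $|A\cap[1,m-x_1)| > x_2-x_1-|B|$ demanded by the proposition becomes exactly $|A\cap[1,q)| > |A\cap(q,4q]|$, which is the hypothesis of the corollary. Finally, the covering hypothesis $(m,m+n]\subseteq \bigcup_{i\in B}(A+i)$ reads $(2q,4q]\subseteq A+((q,4q]\setminus A)$, again given.

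Applying Proposition \ref{process_for_refined_additive_complement} then produces a set $S\subseteq B=(q,4q]\setminus A$ with $(2q,4q]\subseteq \bigcup_{i\in S}(A+i)=A+S$ and, after substituting the chosen values and the identity above,
$$|S| \leq \bigl(C|B|+2q\bigr)\,\frac{\log\bigl(|A\cap[1,q)|-|A\cap(q,4q]|\bigr)}{|A\cap[1,q)|-|A\cap(q,4q]|}.$$
To conclude, I would bound $|B| = 3q-|A\cap(q,4q]|\leq 3q$, so that $C|B|+2q\leq (3C+2)q$; replacing the constant $C$ by $3C+2$, which is still independent of $q$ and $A$ since the $C$ coming from the proposition is, gives the asserted estimate.

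Since the statement is a specialization, there is no real obstacle here: the only points that need care are the arithmetic identity $x_2-x_1-|B|=|A\cap(q,4q]|$, which is what makes the hypothesis of the corollary coincide with the numerical hypothesis of the proposition, and the absorption of the proposition's constant into a new one depending on nothing but itself.
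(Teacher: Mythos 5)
Your proposal is correct and is exactly the paper's argument: the paper proves the corollary by the same one-line specialization $x_1=q$, $x_2=4q$, $m=n=2q$, $B=(q,4q]\setminus A$ of Proposition \ref{process_for_refined_additive_complement}, leaving implicit the bookkeeping identity $x_2-x_1-|B|=|A\cap(q,4q]|$ and the absorption of $C|B|+2q\leq(3C+2)q$ into the constant, both of which you carry out correctly.
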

	\begin{proof}
		Putting $x_1 = q, x_2 =4q, m=2q,n=2q$ and $B = (q,4q]\setminus A$ in Proposition \ref{process_for_refined_additive_complement}, we get the proof of this corollary.
	\end{proof}
	By combining Proposition \ref{process_for_refined_additive_complement} and Corollary \ref{process_for_refined_additive_complement_corollary}, we obtain the proof of Theorem \ref{Ratio test} in the following way: 
	\begin{proof}[Proof of Theorem \ref{Ratio test}]
		Since  $\liminf_{n\rightarrow \infty } (a_{n+1}/a_{n}) >1$, by Lemma \ref{lemma-3},  there exists a natural  number $n_{0}$ and a real number $\alpha>1$ such that 
		\begin{equation}
			a_{n+1}\geq \alpha a_n ~\forall~ n\geq n_0.\label{MTHeq1}
		\end{equation}
		Since $\alpha>1$, we can choose $r\in \mathbb{N}$ such that \begin{equation}
			\alpha^r\geq 4.\label{MTHeq2}
		\end{equation}
		Let $p$ be a positive integer such that $p>\max\{a_{n_0}, a_{2r+1}\}$.
		
		Let $q$ be a positive integer such that $q\geq p$. Then there exist positive integers $u_q$ and $m_q$  such that $u_q \geq n_0$ and
		\begin{equation}
			A\cap (q, 4q]=\{a_{u_q+1},a_{u_q+2},\ldots ,a_{u_q+m_q}\}.\label{MTHeq3}
		\end{equation}
		This together with \eqref{MTHeq1} and \eqref{MTHeq2} gives
		$$a_{u_q+r+1}\geq \alpha^{r}a_{u_q+1} > 4q.$$ 
		This implies that $a_{u_q+r+1}\not\in A\cap (q,4q]$ and so,
		$|A\cap (q,4q]|=m_q\leq r$. Therefore, we have
		\begin{equation}
			|A\cap (x,4x]|\leq r ~\forall ~ x\geq p.\label{THMeq4}
		\end{equation}
		
		Since $ p>a_{2r+1}$, we have $|A\cap[1,x)|\geq 2r+1$ for every $x\geq p$. This alongwith inequality $\eqref{THMeq4}$, we get that
		\begin{equation}
			|A\cap (x,4x]|<|A\cap[1,x)|~\forall~ x\geq p. \label{THMeq4*}
		\end{equation}
		By Proposition \ref{existence_disjoint_additive_complement} this gives us 		$$(2x,4x]\subset A+ \left((x,4x]\setminus A\right) ~\forall~ x\geq p.$$
		%	**********************************************************	
		Using  this  alongwith the ineauality \eqref{THMeq4*}   in Corollary \ref{process_for_refined_additive_complement_corollary}, we get that for every $m\geq p$ there exists a set $S_m \subseteq (m,4m]\setminus A $ such that
		\begin{equation}
			(2m,4m]\subset A+ S_m,\label{THMeq5}
		\end{equation} and
		\begin{equation}
			|S_m|\leq C m \dfrac{\log (|A\cap [1,m]|-|A\cap (m,4m]|)}{|A\cap [1,m]|-|A\cap (m,4m]|}. \label{THMeq6}
		\end{equation}
		Further, using 
		\eqref{THMeq4} and the fact that $\dfrac{\log x}{x}$ is a decreasing function   on the  domain $(1,\infty)$, we have 
		\begin{equation}
			|S_m|\leq C m \dfrac{\log (|A\cap [1,m]|-r)}{|A\cap [1,m]|-r} ~\text{for every}~m\geq p. \label{THMeq7}
		\end{equation}  
		Since $p>a_{2r+1}$, we have 
		$|A\cap [1,m]|\geq 2r+1$ for every $m\geq p$. Therefore 
		\begin{equation}
			|S_m|\leq2 C m \dfrac{\log (|A\cap [1,m]|)}{|A\cap [1,m]|} ~\text{for every}~ m\geq p. \label{THMeq8}
		\end{equation}
		%%%%%%%%%%%%%%%%%%%%%%%%%%%%%%%%%%%%%%%%%%%%%%%%%%%%%%%%%%%
		Define  $$B:=\bigcup_{i=2+\lfloor\log_2  (p)\rfloor}^{\infty} S_{2^i}.$$
		Observe that, 
		by  taking $m$ as a power of $2$ in the expressions \eqref{THMeq5} and \eqref{THMeq8}, we get that for every positive integer $l>1+\log_2(p)$,
		$S_{2^{l-1}}\subset (2^{l-1},2^{l+1}]\setminus A$ and 
		$(2^l,2^{l+1}] \subset A+ S_{2^{l-1}}$. This proves that $B$ is an additive complement of $A$ in the complement of $A$. 
		
		Now, we claim that the density of $B$ is zero.  To get this, observe that 
		$$B\cap [1,n]\subset \bigcup_{i=\gamma}^{\beta_n} S_{2^i} ~\forall~ n\in \mathbb{N}$$
		where $\gamma=2+\lfloor\log_2  (p)\rfloor$ and $\beta_n=\max\{j:2^j<n\}.$ This gives us
		$$|B\cap [1,n]|\leq \sum_{i=\gamma}^{\beta_n} 2 C 2^i \dfrac{\log (|A\cap [1,2^i]|)}{|A\cap [1,2^i]|}\leq 4C\sum_{i=\gamma}^{\beta_n}   \sum_{j=2^{i-1}+1}^{2^i} \dfrac{\log (|A\cap [1,j]|)}{|A\cap [1,j]|}.$$
		This implies that $$|B\cap [1,n]|\leq 4C \sum_{j=p}^{n} \dfrac{\log (|A\cap [1,j]|)}{|A\cap [1,j]|}.$$
		Thus, by Lemma \ref{density_calculation}, we get that 
		$$\lim_{n\rightarrow\infty}\dfrac{|B\cap [1,n]|}{n}=0.$$
		Therefore, the density of the set  $B$ is zero.
		
	\end{proof}
	\section{Proof of Proposition \ref{process_for_refined_additive_complement}}
	We shall use the following lemmas to prove this proposition.
	\begin{lemma}\label{counting_translation_element}
		Let $A$ be an infinite set of natural numbers and $B$ be a nonempty finite set of natural numbers such that $B \subseteq (a,b]$. For every $n\in \mathbb{N}$, 
		\begin{align*}
			\sum_{i\in B} 1_{A+i}(n)&\geq |A\cap [n-b,n-a)|-|(a,b]\setminus B|.
		\end{align*}

	\end{lemma}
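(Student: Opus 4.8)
The plan is to convert the statement, which is about the translated sets $A+i$, into a statement about the single set $A$ by using the elementary equivalence $n\in A+i \iff n-i\in A$. Concretely, $1_{A+i}(n)=1_{A}(n-i)$ for every $i$ and $n$ (with the convention that $1_{A}(n-i)=0$ when $n-i\notin\mathbb{N}$, which is consistent since $A\subseteq\mathbb{N}$ forces $n\notin A+i$ whenever $n\le i$). Hence $\sum_{i\in B}1_{A+i}(n)=\sum_{i\in B}1_{A}(n-i)$, and the whole lemma becomes a counting inequality for a sum of indicator values.

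Next I would identify the full sum over the interval containing $B$. Since $B\subseteq (a,b]$, and the map $i\mapsto n-i$ is a bijection from the integer interval $(a,b]$ onto the integer interval $[n-b,n-a)$, we get $\sum_{i\in (a,b]}1_{A}(n-i)=\bigl|A\cap[n-b,n-a)\bigr|$ (terms with $n-i\le 0$ contribute $0$ to both sides, so no harm is done). This is the quantity appearing on the right-hand side of the lemma.

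Finally I would split the sum over $(a,b]$ according to whether the index lies in $B$ or not: $\sum_{i\in (a,b]}1_{A}(n-i)=\sum_{i\in B}1_{A}(n-i)+\sum_{i\in (a,b]\setminus B}1_{A}(n-i)$. Each term of the last sum is at most $1$, so $\sum_{i\in (a,b]\setminus B}1_{A}(n-i)\le |(a,b]\setminus B|$. Rearranging gives $\sum_{i\in B}1_{A}(n-i)\ge \bigl|A\cap[n-b,n-a)\bigr|-|(a,b]\setminus B|$, which is exactly the claimed bound.

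This argument is routine and I do not expect a genuine obstacle; the only points demanding a little care are the bookkeeping with half-open integer intervals (checking that $i\in(a,b]$ corresponds precisely to $n-i\in[n-b,n-a)$) and the boundary issue that $n-i$ may fail to be a natural number, both of which are handled by the conventions above.
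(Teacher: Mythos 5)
Your proof is correct and is essentially the paper's own argument: the paper writes $\sum_{i\in B}1_{A+i}(n)=|A\cap(n-B)|$ and subtracts the contribution of $n-((a,b]\setminus B)$ from $|A\cap[n-b,n-a)|$, which is exactly your split of the sum over $(a,b]$ into the parts indexed by $B$ and by $(a,b]\setminus B$, followed by the same trivial bound $|(a,b]\setminus B|$ on the discarded part.
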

	\begin{proof}
		\begin{align*}
			\sum_{i\in B} 1_{A+i}(n)&=| A \cap (n-B)|\\
			&=|A\cap [n-b,n-a)|-|A\cap (n-((a,b]\setminus B))|\\ &\geq |A\cap [n-b,n-a)|-|(a,b]\setminus B|.
		\end{align*}
		
	\end{proof}
	\begin{lemma}\label{counting_all_translation_element}
		Let $A$ be an infinite set of natural numbers and $B$ be a non-empty finite set of natural numbers. Let $R$ be a non-empty finite set of natural numbers such that every translation of the form $A+x$, where $x\in B$, contains at most $r$ elements of $R$. Then 
		
		$$\left| \sum_{b\in B}\sum_{t\in R} 1_{A+b}(t)\right|\leq r|B|.$$
	\end{lemma}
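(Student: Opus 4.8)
The plan is to unwind the double sum by interchanging the order of summation and fixing the outer index. For a fixed $b\in B$, I would recognize the inner sum as a counting function:
$$\sum_{t\in R} 1_{A+b}(t) = |R\cap (A+b)|,$$
since each summand is $1$ precisely when $t\in A+b$ and $0$ otherwise. In particular every inner sum is a nonnegative integer, and hence the entire double sum is a nonnegative real number, so the absolute value signs in the statement are harmless and can be dropped at the end.

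Next I would apply the hypothesis verbatim. The assumption that every translate $A+x$ with $x\in B$ contains at most $r$ elements of $R$ is exactly the statement $|R\cap(A+b)|\leq r$ for each $b\in B$. Combining this with the identity above yields $\sum_{t\in R}1_{A+b}(t)\leq r$ for every $b\in B$. Summing this bound over the $|B|$ elements of $B$ gives
$$\sum_{b\in B}\sum_{t\in R} 1_{A+b}(t) \leq \sum_{b\in B} r = r|B|,$$
and since the left-hand side is nonnegative it equals its own absolute value, which is the claimed inequality.

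I do not expect any real obstacle here: the finiteness of $B$ and $R$ makes all the sums finite, no property of $A$ beyond being a set of natural numbers is needed, and the only point requiring a moment's care is the translation of the phrase ``contains at most $r$ elements of $R$'' into the inequality $|(A+b)\cap R|\leq r$, which is immediate once the indicator function is summed over $R$. The whole argument is therefore a two-line computation, and the write-up should be correspondingly short.
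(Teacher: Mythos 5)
Your proposal is correct and is essentially identical to the paper's own proof: both bound the inner sum $\sum_{t\in R}1_{A+b}(t)$ by $r$ using the hypothesis and then sum over $B$. Your extra remark that the double sum is nonnegative (so the absolute value is harmless) is a small point the paper leaves implicit.
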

	\begin{proof}
		Since  every translate of the form $A+x$, where $x\in B$, contain at most $r$ elements of $R$, we have 
		$$\sum_{t\in R} 1_{A+b}(t)\leq r ~\forall~ b\in B.$$
		Therefore,
		$$ \sum_{b\in B}\sum_{t\in R} 1_{A+b}(t)\leq  \sum_{b\in B}r=  r|B|.$$
	\end{proof}
	\begin{proof}[Proof of Proposition \ref{process_for_refined_additive_complement}]
		Define $f: \mathbb{N}\rightarrow \mathcal{P}(\mathbb{N})$ as follows: 
		$$f(x):=(A+x)\cap (m, m+n].$$ 
		It is given that $(m, m+n]\subset \bigcup_{i\in B} (A+ i)$. So,
		by induction,  we choose a sequence $(b_i)_{i=1}^t$ of distinct elements of $B$ such that for every $j\in [1,t]$, we have 
		\begin{align}
			&\left|f(b_j)\setminus \left(\bigcup_{i=1}^{j-1}f(b_i)\right) \right|= \max\left\{ \left|f(r) \setminus \left(\bigcup_{i=1}^{j-1}f(b_i)\right)\right |: r\in B\setminus \bigcup_{i=1}^{j-1}\{b_i\}\right\}, \label{prop2eq0}
		\end{align} 
		where  $t$	 is the smallest positive integer such that  
		\begin{equation*}
			(m,m+n] \subseteq \bigcup_{i=1}^{t} f(b_i).
		\end{equation*} Let $S=\bigcup_{i=1}^t \{b_i\}$. Then $(m,m+n] \subseteq A+S$. Now, we compute the cardinality of the set $S$.
		
		Let $q:=|f(b_1)|$. For every $j \in \mathbb{N}$, define  
		\begin{equation}\label{prop-2-eq-1}
			T_j:=\left\{r\in [1,t]: \left|f(b_r)\setminus \left(\bigcup_{i=1}^{r-1}f(b_i)\right) \right|=j\right\} \quad \text{and}\quad K_j :=|T_j|.
		\end{equation}
		From  \eqref{prop2eq0}, observe that  $T_j=\varnothing$ and $	K_j=0 $ for each integer $j>q$. 
		Also, from  \eqref{prop2eq0} and \eqref{prop-2-eq-1} we obtain  a decreasing  sequence $(n_i)_{i=0}^q$ of positive integers where $n_0=t$ and $n_q=1$ such that
		\begin{equation*}
			T_q=[1,n_{q-1}], T_{q-1}=(n_{q-1},n_{q-2}], \cdots, T_{j}=(n_j,n_{j-1}], \cdots, T_1=(n_{1}, t]. \label{prop2eq1}
		\end{equation*} 
		Therefore,
		\begin{equation}\label{eq-xx}
			|S|=\left|\bigcup_{x=1}^q T_x\right|=\sum_{x=1}^{q}K_x.
		\end{equation}  Thus, computation of $K_x$ will help us to compute  $|S|$.
		
		Define $H: [0,q]\rightarrow \mathcal{P}(\mathbb{N})$ be a function as follows: 
		$$H(x):=(m,m+n]\setminus \left(\bigcup_{i=x+1}^{q}\bigcup_{r\in T_i} f(b_r)\right).$$	
		One can easily see  that  $H(x-1)\subset H(x)$ for $x\geq 1$. This implies that
		\begin{align*}
			|H(x)|-|H(x-1)|&=
			|H(x)\setminus H(x-1)|\\
			&=\left|\left(\bigcup_{i=x}^{q}\bigcup_{r\in T_x} f(b_r)\right)\setminus \left(\bigcup_{i=x+1}^{q}\bigcup_{r\in T_i} f(b_r)\right)\right|\\
			&=\left|\left(\bigcup_{r\in T_x} f(b_r)\right)\setminus \left(\bigcup_{i=x+1}^{q}\bigcup_{r\in T_i} f(b_r)\right)\right|\\
			&=\left|\left(\bigcup_{r=n_x+1}^{n_{x-1}}f(b_r)\right)\setminus \left(\bigcup_{r=1}^{n_x} f(b_r)\right)\right|\\
			&=\left|\left(\bigcup_{r=n_x+1}^{n_{x-1}}(f(b_r)\setminus \cup_{i=n_x+1}^{r-1}f(b_i))\right)\setminus \left(\bigcup_{r=1}^{n_x} f(b_r)\right)\right|\\
			&=\left|\bigcup_{r=n_x+1}^{n_{x-1}}\left(f(b_r)\setminus \bigcup_{i=1}^{r-1}f(b_i)\right)\right|=\left|\bigcup_{r\in T_x}\left( f(b_r)\setminus \left(\bigcup_{i=1}^{r-1} f(b_i)\right)\right)\right|\\
			&=\displaystyle\sum_{r\in T_x}\left|f(b_r)\setminus \left(\bigcup_{i=1}^{r-1} f(b_i)\right)\right|=x|T_x|. 
		\end{align*}
		Therefore,
		\begin{equation}
			|H(x)|-|H(x-1)|=xK_x.\label{prop2eq2}
		\end{equation}
		Observe from this equation that we can get the value of $K_x$ by computing the cardinality of  $H(x)$.
		
		Now, we compute the cardinality of  $H(x)$ by computing the lower and upper bound of  $\sum_{i\in B}\sum_{t\in H(x)} 1_{A+i}(t)$. To get the upper bound of this quantity, 
		we shall count the  number of elements of the set $f(u)$ inside  the set $H(x)$ for every $x\geq 1$ and for every $u\in B$.  Observe that for every $u\in B$ and $x\in [0,q]$, we have
		\begin{align*}
			\left|f(u)\cap H(x)\right|&=\left|f(u)\setminus \left(\bigcup_{i=x+1}^{q}\bigcup_{b\in T_i} f(b)\right)\right|
			=\left|f(u)\setminus \left(\bigcup_{i=1}^{n_x} f(b_i)\right)\right|. 
		\end{align*} 
		Using \eqref{prop2eq0} in the above equation, we obtain 
		\begin{equation*}
			\left|f(u)\cap H(x)\right|\leq \left|f(b_{n_x+1})\setminus \left(\bigcup_{i=1}^{n_x} f(b_i)\right)\right|\leq x, 
		\end{equation*}
		because $n_x+1\in T_j$ for some $j\in [1,x]$. Therefore,
		\begin{equation}
			\left|f(u)\cap H(x)\right|\leq x ~\forall~ u\in B, x\in [0,q]. \label{prop2eq3}
		\end{equation}
		This together with Lemma \ref{counting_all_translation_element}  gives  that
		\begin{equation}
			\sum_{i\in B}\sum_{t\in H(x)} 1_{A+i}(t)\leq x|B|~ \forall~ x\in [0,q].\label{prop2eq4}
		\end{equation}
		We shall use  Lemma \ref{counting_translation_element} to get a lower bound of $\sum_{i\in B}\sum_{t\in H(x)} 1_{A+i}(t)$. Since $B \subseteq (x_{1}, x_{2}]$, Lemma \ref{counting_translation_element} implies that
		%Using $x_2\geq m+n$, By Lemma \ref{counting_translation_element}, gives that
		\begin{align*}
			\sum_{i\in B}\sum_{t\in H(x)}1_{A+i}(t)&=\sum_{t\in H(x)}\sum_{i\in B} 1_{A+i}(t)\\
			&\geq \sum_{t\in H(x)}\left(|A \cap [t-x_2,t-x_1)|- (x_2-x_1-|B|)\right),
		\end{align*} for every $x \in [0,q]$.
		Note that $t-x_2\leq 0$ for every $t\in H(x)$, because $H(x)\subset (m,m+n]$ and $m+n\leq x_2$. Thus, for every $x\in [0,q]$, we have
		\begin{align}
			\sum_{i\in B}\sum_{t\in H(x)}1_{A+i}(t)&\geq \sum_{t\in H(x)}\left(|A \cap [1,t-x_1)|- (x_2-x_1 -|B|)\right)\nonumber\\
			& \geq |H(x)| \left(|A \cap [1,m-x_1)|- (x_2-x_1 -|B|)\right).\label{prop2eq6}
		\end{align}
		Combining  \eqref{prop2eq4} and \eqref{prop2eq6}, we get that
		\begin{equation*}
			|H(x)| \left(|A \cap [1,m-x_1)|- (x_2-x_1 -|B|)\right)\leq x|B|.
		\end{equation*}
		Therefore, for every $x\in [0,q]$, we obtain \begin{equation}
			|H(x)|\leq \dfrac{x|B|}{|A \cap [1,m-x_1)|- (x_2-x_1-|B|)}.\label{prop2eq9}
		\end{equation}

		Now, we compute the upper bound of the cardinality of the set $S$. Let $q_0$ be an arbitrary positive integer. Then the equation \eqref{eq-xx} implies that

		\begin{equation}
			|S|=\begin{cases}
				\sum_{x=1}^{q_0}K_x+\sum_{x=q_0+1}^{q}K_{x}, & q_0<q;\\
				\sum_{x=1}^{q}K_x,  & q_0\geq q.\label{prop2eq7}
			\end{cases}
		\end{equation}
		Now, we approximate the sum $\sum_{x=q_0+1}^{q}K_{x}$. Since $f(b_i)\subset (m,m+n]$ for each $i\in\cup_{x=1}^{q} T_x$, we get that
		\begin{align*}
			n\geq \left|\bigcup_{x=q_0+1}^{q}\bigcup_{j\in T_x}\left(f(b_{j})\setminus \left(\bigcup_{i=1}^{j-1} f(b_i)\right)\right)\right|&=\sum_{x=q_0+1}^{q}\sum_{j\in T_x}\left|f(b_{j})\setminus \left(\bigcup_{i=1}^{j-1} f(b_i)\right)\right|\\
			&=\sum_{x=q_0+1}^{q}\sum_{j\in T_x}x\\ &=\sum_{x=q_0+1}^{q}xK_x\\
			&>q_0\sum_{x=q_0+1}^{q}K_x.
		\end{align*}
		
		Therefore,
		\begin{equation}
			\sum_{x=q_0+1}^{q}K_{x}<\dfrac{n}{q_0}.\label{prop2eq8}
		\end{equation}
		Observe from the expressions \eqref{prop2eq2}   that 
		
		\begin{align*}
			\sum_{x=1}^{q'}K_{x} &= \sum_{x=1}^{q'}\dfrac{|H(x)|-|H(x-1)|}{x}  
			=  \sum_{x=1}^{q'-1}\dfrac{1}{x(x+1)}|H(x)| + \dfrac{1}{q'}|H(q')| ~\forall~ q'\leq q.
		\end{align*}
		The upper bound of $|H(x)|$ given in  \eqref{prop2eq9} and   the above equation implies that
		\begin{align}
			\sum_{x=1}^{q'}K_{x}& \leq \frac{|B|}{\left(|A \cap [1,m-x_1)|- (x_2-x_1-|B|)\right)}\left(\sum_{x=1}^{q'}\dfrac{1}{x}\right) ~\forall~ q'\leq q. \label{propo2eq11}
		\end{align}
		By combining \eqref{prop2eq7}, \eqref{prop2eq8} and \eqref{propo2eq11}, we obtain
		\begin{equation}
			|S|\leq \frac{|B|}{|A \cap [1,m-x_1)|- (x_2-x_1-|B|)}\left(\sum_{x=1}^{q_0}\dfrac{1}{x}\right)+\frac{n}{q_0}.\label{prop2eq11}
		\end{equation}
		For every $j\in \mathbb{N}$, we have  $\sum_{i=1}^{j}\dfrac{1}{i}<C\log j$, where $C$ is a constant independent to $j$. Therefore,
		\begin{equation}
			|S|\leq \frac{C|B|\log q_0}{|A \cap [1,m-x_1)|- (x_2-x_1-|B|)}+\frac{n}{q_0}.\label{prop2eq12}
		\end{equation}
		Since $q_0$ is an arbitrary positive integer, we  choose  $$q_0=\left\lfloor\dfrac{|A \cap [1,m-x_1)|- (x_2-x_1-|B|)}{\log({|A \cap [1,m-x_1)|- (x_2-x_1-|B|)})}\right\rfloor.$$ Putting this value of $q_0$ in  \eqref{prop2eq12}, we obtain
		\begin{align*}
			|S| & \leq (C|B|
			+n) \dfrac{\log(|A \cap [1,m-x_1)|- (x_2-x_1-|B|))}{|A \cap [1,m-x_1)|- (x_2-x_1-|B|)}.
		\end{align*}

	\end{proof}
	\section{Conclusion}
	
	We proved the ratio test for the existence of the additive complement of a given set $A$ in the complement of the set $A$. This test does not work for those sets $A = \{a_{i} \in \mathbb{N}: i\in \mathbb{N}~ a_{i}< a_{i+1}\}$ such that $\liminf_{n\rightarrow \infty } (a_{n+1}/a_{n}) =1$.  In the case of  $\liminf_{n\rightarrow \infty } (a_{n+1}/a_{n}) =1$, we can observe some examples for non-existence of any additive complement in the complement of the set $A$ using Corollary \ref{non-exitance of additive complement}. If we take a strictly increasing sequence   $(c_n)_{n=1}^{\infty}$ formed by the all composite numbers, then we observe that $\liminf_{n\rightarrow \infty } (c_{n+1}/c_{n}) =1$  and the set of all prime numbers works as an additive complement in the complement of the set of composite numbers.   This motivates us to ask the following question. 
	\begin{question}
		Let $A = \{a_{i} \in \mathbb{N}: i\in \mathbb{N}~ a_{i}< a_{i+1}\}$. Give a sufficient condition on the set $A$ with $\liminf_{n\rightarrow \infty } (a_{n+1}/a_{n}) =1$ such that there exists an additive complement of the set $A$ in the complement of $A$.
	\end{question}
	We are also interested in the following question related to the root test for additive complement.
	\begin{question}
		Let $A = \{a_{i} \in \mathbb{N} : i\in \mathbb{N}\}$ such that $a_{i}<a_{i+1}$ for all $i \in \mathbb{N}$. If $\liminf_{n\rightarrow \infty } (a_{n})^{\frac{1}{n}} >1$, then does there exist a set $B$, a set of natural numbers with density zero, such that $B$ is an additive complement of $A$ in the complement of $A$.
	\end{question}
	
	If we think of a relation between two additive complements of a given set, then we observe that it is easy to find such examples of sets $A$ such that it has at least two disjoint additive complements. So, can we get an example of a set $A$ such that every additive complement intersects other additive complement of the set  $A$?  
	\bibliographystyle{amsplain}

\begin{thebibliography}{5}
		\bibitem{erdos} P. Erd\H{o}s, Some results on additive number theory, \textit{Proc. Am. Math. Soc.} \textbf{5} (6) (1954), 847-853.
		
		\bibitem{Faisant} A. Faisant, G. Grekos,  R. K. Panday and S. T. Somu, Additive complements for a given asymptotic density,  \textit{Mediterr. J. of Math.} \textbf{18} (1) (2021), 1-13.
		
		\bibitem{Grekos} G. Grekos, R. K. Pandey and S. T. Somu, Sumsets with prescribed lower and upper asymptotic densities, \textit{Mediterr. J. of Math.} \textbf{19} (5) (2022), 1-9.
		\bibitem{Leonetti1} P. Leonetti and S. Tringali, On the density of sumsets, {\it Montash. Math.} \textbf{198} (3) (2022),  565-580.
		\bibitem{Leonetti2} P. Leonetti and S. Tringali, On the density of sumsets II, \textit{Bull. Austr. Math. Soc.} \textbf{109} (3) (2024), 414–419.
		
		\bibitem{Lorentz} G. G. Lorentz, On a problem of additive number theory, \textit{Proc. Am. Math. Soc.} \textbf{5} (5) (1954), 838-841.
		\bibitem{Mohan_Patil_2024} Mohan, B. R. Patil and R. K. Pandey, On additive complement with special structures, preprint https://doi.org/10.48550/arXiv.2402.03280. 
	\end{thebibliography}

\end{document}